\newtheorem{con}{\bf Conjecture}
\newtheorem{lem}{\bf Lemma}
\newtheorem{thm}{\bf Theorem}
\newtheorem{corr}{\bf Corollary}
\newtheorem{quest}{\bf Question}
\newcommand\gc{\gcd}
\newcommand\be{\begin{eqnarray*}}
\newcommand\ee{\end{eqnarray*}}
\newcommand\beq{\begin{equation}}
\newcommand\eeq{\end{equation}}
\newcommand\ben{\begin{eqnarray}}
\newcommand\een{\end{eqnarray}}
\newcommand\ord{\mathrm{ord}}
\begin{document}

\title[Product sets cannot contain long arithmetic progressions] 
{Product sets cannot contain long arithmetic progressions}

\author{Dmitrii Zhelezov}
\thanks{Department of Mathematical Sciences, 
Chalmers University Of Technology \and
Department of Mathematical Sciences,  University of Gothenburg} 
\address{Department of Mathematical Sciences,
Chalmers University Of Technology, 41296 Gothenburg, Sweden} 
\address{Department of Mathematical Sciences, University of Gothenburg,
41296 Gothenburg, Sweden} \email{zhelezov@chalmers.se}

\subjclass[2000]{11B25 (primary).} \keywords{product sets, arithmetic progressions, convex sets}

\date{\today}

\begin{abstract}
  Let $B$ be a set of natural numbers of size $n$. We prove that the length of the longest arithmetic progression contained in the product set $B.B = \{bb'| \, b, b' \in B\}$ cannot be greater than $O(\frac{n\log^2 n}{\log \log n})$ and present an example of a product set containing an arithmetic progression of length $\Omega(n \log n)$. For sets of complex numbers we obtain the upper bound $O(n^{3/2})$.
\end{abstract}

\maketitle
\section{Introduction}
  Sum-product estimates are among the most important questions in modern additive combinatorics. In general, one wants to show that if there is enough additive structure in a set $A$ (for example if it has small doubling constant $\frac{|A+A|}{A}$), then the {\it product set} $A.A = \{aa'|a, a' \in A\}$  is large. The most famous conjecture in this area was posed by Erd\H{o}s and Szemer\'edi \cite{ES}, which says that for any set $A$ of complex numbers holds
 $$
 \max(|A.A|,|A+A|) \geq c|A|^{2-\epsilon},
 $$
 for arbitrary $\epsilon > 0$ and some $c > 0$ that may depend on $\epsilon$. The state of the art exponent $4/3-o(1)$ was obtained by Solymosi in a very elegant way \cite{S}. It is worth noting that each new bound for the exponent required a substantial new idea and attracted considerable attention from experts in the field.
 
 In this note we investigate a different sort of relationship between the additive structure and the size of a product set. Namely, we show that a product set cannot contain extremely long arithmetic progressions. The result is the following.
\begin{thm} \label{thm:main}
  Suppose that $B$ is a set of $n$ natural numbers. Then the longest arithmetic progression in $B.B$ has length at most $O(\frac{n\log^2 n}{\log \log n})$.
\end{thm}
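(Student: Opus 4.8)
The plan is to show that if $P=\{a+kd:0\le k<L\}\subseteq B.B$, then $L=O(n\log^2 n/\log\log n)$ by analysing the factorizations inside $B$ that realize the individual terms of $P$. First I would reduce to the coprime case $\gcd(a,d)=1$. Since $\gcd(a+kd,d)=\gcd(a,d)=:g$ is the same for every term, every $b\in B$ dividing some term satisfies $\gcd(b,d)\mid g$, so one can hope to factor out $g$ and pass to the coprime progression $\{a/g+kd/g\}$; I would keep track of $g$ rather than discard it, since the divisors of $g$ that may be absorbed into the two factors are exactly the bookkeeping I expect to contribute a $d(g)$-type loss. Assuming coprimality, write each term as $a+kd=b_kb_k'$ with $b_k\le b_k'$, both in $B$, so the smaller factor obeys $b_k\le\sqrt{a+kd}$ and $\gcd(b_k,d)=1$. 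Assigning to each index $k$ this smaller factor and setting $S_b=\{k:b_k=b\}$ gives a partition $\sum_{b\in B}|S_b|=L$ into at most $n$ classes.

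Next I would examine a single class. Fix $b\in B$. Because $\gcd(b,d)=1$, the indices with $b\mid a+kd$ occupy one residue class modulo $b$, so $|S_b|\le L/b+1$; moreover the cofactors $\{(a+kd)/b:k\in S_b\}$ are distinct elements of $B$ lying in an arithmetic progression of common difference $d$. Thus each class is simultaneously pinned to one residue modulo $b$ and has all of its cofactors inside the small set $B$. The naive conclusion would sum $|S_b|\le L/b+1$ over $b\in B$, but $\sum_{b\in B}1/b=O(\log n)$, so this only gives $L\le L\cdot O(\log n)+n$, which is vacuous; the sole unconditional bound remains the trivial $L\le\binom{n+1}{2}$ coming from injecting terms into pairs. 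The entire difficulty is to break this harmonic barrier, and the mechanism must be that the cofactor constraint forbids most small $b$ from actually being used $\sim L/b$ times.

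To exploit that constraint I would reformulate the surviving data as a point--hyperbola incidence count: a set of $L$ lattice points, one on each hyperbola $xy=a+kd$, all lying in the grid $B\times B$. I would decompose the smaller factors dyadically, $b\in[2^j,2^{j+1})$, using in each scale both that a fixed $b$ meets only $\lesssim L2^{-j}$ of the hyperbolas and that its cofactor must land in the $n$-element set $B$. The gain over the trivial bound should come from the rigidity that two indices sharing a small factor $b$ force $b\mid(k-k')d$, together with charging each term only to the smallest prime factor of the chosen factor so that it is counted once. I expect the two logarithmic factors to arise as one $\log n$ from the dyadic/harmonic summation over the scales of the factors, and a second $\log n$ from summing over admissible residues, with the denominator $\log\log n$ entering through the bound $\omega(m)\le(1+o(1))\log m/\log\log m$ on the number of distinct prime factors, which limits how many genuinely different small factors a single term can offer and hence how efficiently the progression can be covered.

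The hard part is precisely this last step: turning the qualitative statement that most small $b$ cannot be used $L/b$ times, because their cofactors would have to fill a dense sub-progression of the $n$-element set $B$, into a quantitative bound that saves the needed logarithmic factor. I expect this to require either a sieve-type estimate controlling how many terms of $P$ have their smallest $B$-divisor in a prescribed range, or a structural input bounding the number of solutions of $b(c-c')=(k-k')d$ with $b,c,c'\in B$. Establishing such a bound, and checking that the coprime reduction loses at most a $d(g)$-type factor, is where the genuine work of the proof lies and where I would concentrate the effort.
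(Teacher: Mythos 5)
Your write-up correctly performs the easy reductions (passing to a coprime progression, partitioning the terms according to a chosen factor in $B$) and, to your credit, correctly diagnoses that the resulting harmonic-sum bound $\sum_b(L/b+1)$ is vacuous. But the proposal stops exactly at the point where the proof has to begin: you state that ``the genuine work of the proof lies'' in breaking this barrier via an unspecified point--hyperbola incidence count or sieve estimate, and you do not supply that argument. As written, the only unconditional bound you establish is the trivial $L\le\binom{n+1}{2}$, so there is a genuine gap rather than a complete proof.

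For comparison, the paper's mechanism is quite different from what you sketch, and it is worth seeing why. One encodes each term $a=b_ib_j$ of the progression as an edge of a (simple, bipartite) graph $G$ on the vertex set $B$, and runs a dichotomy on short even cycles. If $G$ has no cycle $C_{2\ell}$ for $\ell\le k$, the Bondy--Simonovits theorem caps the number of edges, hence $N\le \mathrm{ex}(n,C_{2k})<100kn^{1+1/k}$. If $G$ does contain a $2k$-cycle, multiplying the alternate edges around the cycle yields a polynomial identity $\prod(r+j_{2i}d)=\prod(r+j_{2i-1}d)$ in $r$ and $d$, which forces $r,d\le N^{k}$ because $r$ and $d$ are coprime and must divide certain nonzero coefficients of size at most $N^k$. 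With $d$ and $r$ so bounded, one takes the roughly $N/(6\log N)$ primes $p\in[N/3,N/2]$ not dividing $d$; each such $p$ divides only two or three terms of the progression, while each term has at most $k+1$ such prime divisors, so one can select more than $n$ edges each carrying a ``private'' prime $p$. Any cycle through such an edge would again give a product identity whose two sides have different $p$-adic valuations --- impossible --- so these $>n$ edges span an acyclic graph on $n$ vertices, a contradiction unless $N\ll kn\log n$. Optimizing $k=\log n/\log\log n$ between the two branches gives the stated bound; note in particular that the $\log\log n$ in the denominator comes from this optimization, not (as you conjecture) from the bound $\omega(m)\lesssim\log m/\log\log m$ on the number of prime factors. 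The key ideas your proposal is missing are the cycle dichotomy with an extremal graph theory input, and the use of cycles to produce multiplicative identities that can be contradicted $p$-adically.
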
 
A lower bound is provided by the following theorem. 
\begin{thm} \label{thm:lowerbound}
  Given a integer $n > 0$ there is a set $B$ of $n$ natural numbers such that $B.B$ contains an arithmetic progression of length $\Omega(n \log n)$.
\end{thm}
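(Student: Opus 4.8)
The plan is to exhibit a set $B$ whose product set contains the full interval $\{1,2,\dots,m^{2}\}$ for a suitable parameter $m$. Since $\{1,2,\dots,m^{2}\}$ is itself an arithmetic progression of common difference $1$ and length $m^{2}$, everything reduces to making this happen with $|B|=O(m^{2}/\log m)$; choosing $m$ so that $m^{2}/\log m\asymp n$ then gives a progression of length $m^{2}=\Omega(n\log n)$ (padding $B$ with arbitrary large integers to reach exactly $n$ elements only enlarges $B.B$).

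First I would take
\[
B=\{1,2,\dots,m\}\ \cup\ \{p\ \text{prime}:m<p\le m^{2}\}\ \cup\ X,
\]
where $X$ denotes the set of $m$-smooth integers $j\in(m,m^{2}]$ having no divisor in the interval $[j/m,m]$. Checking $\{1,\dots,m^{2}\}\subseteq B.B$ is then a short case analysis on an integer $j\le m^{2}$: if $j\le m$ write $j=j\cdot 1$; if $j$ has a prime factor $p>m$, then $p\le m^{2}$ lies in $B$ and the cofactor $j/p<m$ lies in $\{1,\dots,m\}$; if $j$ is $m$-smooth and has a divisor $d\in[j/m,m]$ then both $d$ and $j/d$ lie in $\{1,\dots,m\}$; and the remaining $m$-smooth $j$ are exactly those placed in $X$, for which $j=j\cdot 1$.

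It remains to bound $|B|$. The block $\{1,\dots,m\}$ contributes $m$, and by the prime number theorem the primes in $(m,m^{2}]$ number $O(m^{2}/\log m)$, so the main point is to prove $|X|=O(m^{2}/\log m)$. Writing $P(j)$ for the largest prime factor, I would first record the structural fact that every $j\in X$ satisfies $jP(j)>m^{2}$: letting $u$ be the largest divisor of $j$ with $u\le m$ and $q$ the smallest prime factor of $j/u$, maximality of $u$ forces $uq>m$, hence $j/u<jP(j)/m$, and if we had $jP(j)\le m^{2}$ this would place $u$ in $[j/m,m]$, contradicting $j\in X$. Setting $p=P(j)$ and $j=pj'$ with $j'$ being $p$-smooth, the inequality $jP(j)>m^{2}$ reads $j'>m^{2}/p^{2}$, whence
\[
|X|\ \le\ \sum_{p\le m}\#\{\,j'\le m^{2}/p:\ P(j')\le p\,\}\ =\ \sum_{p\le m}\Psi\!\left(m^{2}/p,\,p\right).
\]

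The hard part will be evaluating this sum. Using the de Bruijn--Dickman estimate $\Psi(x,y)\asymp x\,\rho(\log x/\log y)$ and grouping the primes $p$ into dyadic ranges around $m^{\theta}$ ($0<\theta\le1$), each range contributes about $\frac{m^{2}}{\log m}\,\rho\!\left(\tfrac{2}{\theta}-1\right)$; although there are $\asymp\log m$ such scales, the super-polynomial decay of the Dickman function makes $\sum_{\theta}\rho\!\left(\tfrac{2}{\theta}-1\right)$ converge, so the total is $O(m^{2}/\log m)$ as required. The technical heart of the proof is thus this smooth-number count, i.e.\ showing that $m$-smooth integers below $m^{2}$ without a balanced factorization are no more numerous than the primes we have already included; once this is in hand the progression $\{1,\dots,m^{2}\}$ of length $\Omega(n\log n)$ is delivered.
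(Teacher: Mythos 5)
There is a genuine gap: the key quantitative claim $|X|=O(m^{2}/\log m)$ is false, and the construction collapses with it. First, the sum you reduce to does not have the claimed size: $\sum_{p\le m}\Psi(m^{2}/p,p)$ counts each $m$-smooth $j\le m^{2}$ exactly once (via $p=P(j)$), so it equals $\Psi(m^{2},m)-1\sim\rho(2)m^{2}\asymp m^{2}$. Your dyadic computation goes wrong at the scales $p\asymp m^{\theta}$ with $\theta$ near $1$: there $\rho(2/\theta-1)\asymp 1$, each of the $\asymp\log m$ such dyadic ranges contributes $\asymp m^{2}/\log m$, and the sum over scales of $\rho(2/\theta-1)$ is $\asymp\log m$ rather than $O(1)$ --- the correct normalisation is $\sum_{\mathrm{scales}}f(\theta)\approx(\log_{2}m)\int_{0}^{1}f(\theta)\,d\theta$, and the convergence of the integral does not make the sum bounded. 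Second, and fatally, the set $X$ itself really has size $\gg m^{2}$, so no sharper counting can rescue the scheme: for $j\in(m^{2}/2,m^{2}]$ the forbidden interval $[j/m,m]$ is contained in $[m/2,m]$, and by the Erd\H{o}s--Ford results on the multiplication table problem only $o(m^{2})$ integers up to $m^{2}$ have a divisor in $[m/2,m]$, while $\sim\rho(2)m^{2}/2$ of the integers in $(m^{2}/2,m^{2}]$ are $m$-smooth; hence $\gg m^{2}$ of them lie in $X$. Thus $|B|\asymp m^{2}$ and your progression has length only $O(|B|)$, not $\Omega(|B|\log|B|)$.

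The underlying problem is that covering all of $\{1,\dots,m^{2}\}$ is too ambitious. The paper's proof targets only the interval $[1,n\log n]$ using the block $\{1,\dots,n\}$ together with the primes in $[n,n\log n]$: any $x$ in range with a prime factor $p>\log n$ splits as $p\cdot(x/p)$ with $x/p<n$, and any $\log n$-smooth $x$ admits a balanced factorization $d_{1}d_{2}$ with $d_1, d_2 \le n$ by a greedy splitting, since getting stuck would force $x=d_{1}d_{2}>n^{2}/\log n>n\log n$. Because the target interval is so short relative to the block of consecutive integers, the exceptional set is empty and no smooth-number or divisor-distribution estimates are needed. To salvage your approach you would have to shrink the target interval until the unbalanced smooth numbers disappear, which leads back to essentially the paper's construction.
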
 

In the fourth section of this note we will extend Theorem \ref{thm:main} to sets of complex numbers, but with a considerably weaker bound $O(n^{3/2})$. 
 
\section{Notation}
The following standard notation will be used in this paper: 
\begin{enumerate}
\item $f(n) = O(g(n))$ means that $\limsup_{n \rightarrow \infty} \frac{f(n)}{g(n)} < \infty$.
\item $f(n) = \Omega(g(n))$ means that $g(n) = O(f(n))$.
\item Let $H$ be a fixed graph. Then $\mathrm{ex}(n, H)$ denotes 
the maximal number of edges among all graphs with $n$ vertices which does not contain $H$ as a subgraph. In particular, $\mathrm{ex}(n, C_{k})$ denotes the maximal number of edges a graph with $n$ vertices avoiding cycles of length $k$ can possibly have.
\item Let $p$ be a prime, then $d = \ord_p(n)$ denotes the maximal power of $p$ such that $p^d \mid n$.

\end{enumerate}

\section{Main Result} 
   Let $A = \{r + di\}, i = 0, \ldots, N$ be an arithmetic progression in the product set $B.B$ of a set $B$ of size $n$. We start with the observation that 
by taking absolute values of $B$ the longest arithmetic progression in $B.B$ can be shortened by a factor at most two, 
so we may assume that all elements in $B$ are positive. 
 
  We proceed with the following technical lemma.  
\begin{lem} \label{lem:reduct}
  We may assume that $A = \{D(r' + d'i)\}$ for some $D > 0$ such that $\mathrm{gcd}(d', Dr') = 1$.
\end{lem}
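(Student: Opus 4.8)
The plan is to first normalise $A$ by dividing out $\gcd(r,d)$, then to isolate the single class of primes obstructing the stated coprimality and absorb those primes into the set $B$, at no cost to the length of $A$. Throughout I use that, having already passed to absolute values, every element of $B$ (and hence of $A$) is a positive integer, so all $p$-adic valuations $\ord_p(\cdot)$ are defined.

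Concretely, write $g = \gcd(r,d)$, so that $A = g\{\tilde r + \tilde d i\}$ with $\gcd(\tilde r,\tilde d)=1$. A short valuation computation shows that the only obstruction to $\gcd(\tilde d, g)=1$ comes from primes $p$ with $0 < \ord_p(r) < \ord_p(d)$; I will call these the \emph{bad} primes. Indeed, $p \mid \gcd(\tilde d,g)$ forces $\ord_p(r)\ge 1$ together with $\ord_p(r) < \ord_p(d)$. For such a prime one has $\ord_p(r) < \ord_p(d) \le \ord_p(di)$ for every $i$, so the ultrametric inequality gives $\ord_p(r+di) = \ord_p(r)$ for all $i$; that is, $p^{\ord_p(r)}$ divides every element of $A$ exactly.

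The key step is to eliminate the bad primes by pushing them into $B$. Let $P = \prod_p p^{\ord_p(r)}$, the product over bad primes, and for each $b \in B$ let $b^{*}$ be $b$ with all factors of bad primes stripped off; put $B^{*} = \{b^{*} : b \in B\}$. For any $bb' = r+di$ in $A$ and any bad prime $p$ we have $\ord_p(b)+\ord_p(b') = \ord_p(bb') = \ord_p(r)$, so $b^{*}b'^{*} = bb'/P$, and therefore $A/P \subseteq B^{*}.B^{*}$. Since stripping can only identify elements, $|B^{*}| \le |B| = n$, and $A/P = \{(r/P) + (d/P)i\}$ is again an arithmetic progression of the same length $N+1$. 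Replacing $A$ by $A/P$ and $B$ by $B^{*}$, we may thus assume $A$ has no bad primes while retaining both its length and its membership in a product set of a size-$n$ set.

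It then remains to set $D = \gcd(r,d)$ for the reduced progression and write $A = D\{r' + d' i\}$ with $r' = r/D$, $d' = d/D$. Here $\gcd(r',d')=1$ is automatic, and a prime $p$ dividing both $d'$ and $D$ would force $\ord_p(r)\ge 1$ and $\ord_p(r) < \ord_p(d)$, i.e. a bad prime; as none remain, a valuation check gives $\gcd(d', D)=1$, and hence $\gcd(d', Dr')=1$. The one step requiring genuine care is the third paragraph: verifying that coordinatewise prime-stripping really does preserve containment in a product set of a set of size at most $n$. The remaining verifications are routine valuation bookkeeping.
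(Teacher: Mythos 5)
Your proof is correct, and it rests on the same key observation as the paper's: for a prime $p$ with $0 < \ord_p(r) < \ord_p(d)$ (your ``bad'' primes, which are exactly the primes obstructing $\gcd(d',Dr')=1$), the ultrametric inequality forces $\ord_p(r+di)=\ord_p(r)$ for every $i$, so the $p$-content of $A$ can be pushed into $B$ without lengthening $B$ or shortening $A$. The execution differs enough to be worth noting. The paper removes one offending prime at a time, splits into cases according to whether $\ord_p(r)=1$ or $\ord_p(r)>1$, partitions $B$ into three classes and divides their elements by $p$ or $p^2$ as appropriate, and then iterates the whole lemma; this leaves an implicit termination argument and a slightly awkward case structure. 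You instead strip the full bad-prime content of every element of $B$ in one uniform pass, using the identity $\ord_p(b)+\ord_p(b')=\ord_p(bb')=\ord_p(r)$ valid for any representation $bb'\in A$, which makes the containment $A/P\subseteq B^{*}.B^{*}$, the bound $|B^{*}|\le n$, and the fact that no bad primes survive a single pass all immediate. Both routes deliver the same normal form; yours is the cleaner, non-iterative write-up of the same reduction, and I see no gap in it.
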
  
\begin{proof}
	Let $p$ be a prime such that, $\ord_p(d) > \ord_p(r)$. If there is no such $p$ then $D = \mathrm{gcd}(r, d), d' = d/D, r' = r/D$ provides the desired factorization. 
	If $k' = \ord_p(r) = 1$ then every number in $A$ is a product $b_ib_j$ such that $p \mid b_i$ but $p \nmid b_j$ and thus we can reduce $B$ to 
	$$B' = \{b_i |\,\, b_i \in B, p \nmid  b_i \} \cup \{\frac{b_i}{p} |\,\, b_i \in B, p \mid b_i  \}$$ 
	and iterate the lemma again.
	
	So, now we assume that $k = \ord_p(d) > k' > 1$. We divide $B$ into three sets $B_1, B_2, B_3$ such that $b_i \in B_1$ if  $p \nmid b_i$, $b_i \in B_2$ if $0 < \ord_p(b_i) < k'$ and finally $b_i \in B_3$ if $p^{k'} \mid b_i$. Since $\ord_p(d) > k'$ for every $a \in A$ we have $\ord_p(a) = k'$ and $a$ can be either a product of two numbers from $B_2$ or a product $b_1b_3$ where $b_1 \in B_1$ and $b_3 \in B_3$. Thus, we can reduce $B$ to 
	$$B' = \{b_i |\,\, b_i \in B_1 \} \cup \{ \frac{b_i}{p} |\,\, b_i \in B_2 \} \cup \{\frac{b_i}{p^2} |\,\, b_i \in B_3 \}$$ 
	such that $B'.B'$ contains an arithmetic progression $A/p^2$ of the same length as $A$, and then iterate the lemma.

\end{proof} 

From now on we will assume the factorization $A = \{D(r + di)\}$, such that $\gc(Dr, d) = 1$. By $N$ we will always denote the length of $A$ and $a_k = D(r+dk)$ will be the $k$th element of $A$ (if not stated explicitly). 

\begin{lem} \label{gcdbound}
  For  $i \neq j$ 
  $$\mathrm{gcd}(a_i, a_j) \leq DN.$$
\end{lem}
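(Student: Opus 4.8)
The plan is to reduce everything to an elementary gcd computation, exploiting the coprimality condition $\gc(Dr,d)=1$ furnished by Lemma \ref{lem:reduct}. First I would pull the common factor $D$ out: since $a_i = D(r+di)$ and $a_j = D(r+dj)$, and $D>0$, we have
$$
\gc(a_i,a_j) = D\cdot\gc(r+di,\, r+dj).
$$
So it suffices to bound $g := \gc(r+di, r+dj)$ by $N$.

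Next I would use the standard fact that a common divisor divides any integer combination of the two numbers. In particular $g \mid (r+di)-(r+dj) = d(i-j)$. The key observation is that $g$ is in fact coprime to $d$: from $\gc(Dr,d)=1$ we get $\gc(r,d)=1$, so if some prime $p$ divided both $g$ and $d$, then $p \mid r+di$ together with $p \mid di$ would force $p \mid r$, contradicting $\gc(r,d)=1$. Hence $\gc(g,d)=1$.

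Combining these two facts, from $g \mid d(i-j)$ and $\gc(g,d)=1$ I would conclude $g \mid (i-j)$. Since $i\neq j$ and both indices lie in $\{0,\dots,N\}$, we have $0 < |i-j| \leq N$, so any divisor of $i-j$ is at most $N$; in particular $g \leq N$. Substituting back yields $\gc(a_i,a_j) = Dg \leq DN$, as claimed.

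I do not expect a genuine obstacle here, as the argument is short; the only step requiring care is the coprimality claim $\gc(g,d)=1$, which is precisely where the normalization $\gc(Dr,d)=1$ from Lemma \ref{lem:reduct} is essential. Without it the divisor $g$ could share factors with $d$ and the bound would fail, so I would make sure to state explicitly that $\gc(Dr,d)=1$ forces $\gc(r,d)=1$ before running the prime-by-prime check.
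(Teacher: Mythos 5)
Your proof is correct and follows essentially the same route as the paper: both factor out $D$, reduce to the difference $d(i-j)$, and use $\gc(d,r)=1$ (from $\gc(Dr,d)=1$) to strip the factor $d$ and bound the remaining gcd by $|i-j|\leq N$. The paper phrases this as a one-line chain of gcd identities, while you argue prime-by-prime, but the content is identical.
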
  
\begin{proof}
	For $i > j$ we have
	\be
		\gc(a_i,a_j) &=& \gc(a_i-a_j, a_j) = \gc(Dd(i-j), D(di+r)) \\
								 &=& D\gc(d(i-j), di+r) = D\gc(i-j,di+r) \leq DN.
	\ee
	The last equality follows from $\gc(d, Dr) = 1$.
\end{proof}

 Let us fix a single pair $b_i, b_j \in B$ for each $a \in A$ such that $b_ib_j = a$ and make a graph $G$ with $b \in B$ as vertices, such that for every $a \in A$ there is a unique edge between $b_i$ and $b_j$ which has been previously fixed for such $a$ (for each edge we can simply take the first representation of $a$ in lexicographical order). We will have $n = |B| = |V(G)|$ and $N = |A| = |E(G)|$. 
  It turns out that our further analysis significantly simplifies if $G$ is simple (without loops) and bipartite. However, we can always achieve this sacrificing just a constant factor by simply taking two copies of $B$, say $B_1$ and $B_2$ that are going to be the color classes of $G$, such that for each edge $e = \langle b_i, b_j \rangle \in G, \, i \leq j$ we place an edge between $b_i \in B_1$ and $b_j \in B_2$, so the resulting graph is bipartite and simple.        
 
 As we will see from our example, which provides a lower bound $N = \Omega(n\log n)$, it is safe to assume $N > 2n$, a very weak yet convenient bound, as it guarantees, for example, that $G$ contains a cycle.


  

\begin{lem} \label{lm:dbound}
	If $G$ contains an even cycle of size $2k$, then $r \leq N^k$ and $d \leq N^{k}$. 
\end{lem}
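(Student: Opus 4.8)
The plan is to turn a $2k$-cycle into a multiplicative identity among elements of $A$, and then to read off bounds on $r$ and $d$ from that identity via the rational root theorem.

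First I would label the cyclically ordered vertices of the cycle as $v_1, v_2, \ldots, v_{2k}$, so that its consecutive edges are $v_1v_2, v_2v_3, \ldots, v_{2k}v_1$. Because $G$ is simple and bipartite, the cycle has even length and its $2k$ edges are distinct, hence correspond to $2k$ distinct elements of $A$; write the edge $v_tv_{t+1}$ as $a_{(t)} = D(r + d m_t)$ with the indices $m_t \in \{0, \ldots, N\}$ pairwise distinct. Multiplying the edges of odd index and, separately, those of even index, both products telescope to $v_1 v_2 \cdots v_{2k}$, so that
\[
\prod_{t \text{ odd}} D(r + d m_t) = \prod_{t \text{ even}} D(r + d m_t).
\]
Cancelling the common factor $D^k$ leaves $\prod_{\text{odd}}(r + dm_t) = \prod_{\text{even}}(r + dm_t)$.

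Next I would reinterpret this as a statement about a root. Consider the integer polynomial
\[
F(x) = \prod_{t \text{ odd}}(x + m_t) - \prod_{t \text{ even}}(x + m_t),
\]
whose two products are monic of degree $k$ with disjoint root sets (the $m_t$ being distinct), so $F$ is nonzero of degree at most $k-1$; its coefficients are the differences $\delta_s = e_s(\{m_t\}_{\text{odd}}) - e_s(\{m_t\}_{\text{even}})$ of elementary symmetric functions. The identity above says precisely that $x_0 = r/d$ is a root of $F$, and by Lemma~\ref{lem:reduct} (which gives $\gc(Dr,d)=1$, hence $\gc(r,d)=1$) this fraction is already in lowest terms. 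Applying the rational root theorem, the denominator $d$ divides the leading coefficient $\delta_{s_1}$ (the nonzero $\delta_s$ of smallest index $s_1$), while, after factoring out the largest power of $x$ dividing $F$, the numerator $r$ divides $\delta_{s_0}$ (the nonzero $\delta_s$ of largest index $s_0$). As $\delta_{s_1}$ and $\delta_{s_0}$ are nonzero integers, this yields $d \le |\delta_{s_1}|$ and $r \le |\delta_{s_0}|$.

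It remains to bound these symmetric-function differences. Each satisfies $|\delta_s| \le \binom{k}{s} N^s$, being a signed sum of products of at most $\binom{k}{s}$ factors, each factor at most $N$. Using the weak but crucial inequality $k < N$, guaranteed by $2k \le |V(G)| = 2n < N$, one checks that $\binom{k}{s} N^s \le N^k$ for every $0 \le s \le k$ (for $s=k$ it is an equality, and for $s<k$ it follows from $\binom{k}{s} = \binom{k}{k-s} \le k^{k-s} < N^{k-s}$), which gives both $r \le N^k$ and $d \le N^k$. The main obstacle I anticipate is the degenerate case where the top coefficient $\delta_k$ vanishes — for instance when the index $0$, i.e. the term $Dr$, lies on the cycle and annihilates one of the products. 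This is exactly why the argument must descend to the lowest surviving coefficient $\delta_{s_0}$ rather than naively invoking $\delta_k$, and a little care is needed to confirm that $r/d$ remains a root once the appropriate power of $x$ has been removed.
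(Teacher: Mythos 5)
Your proof is correct and follows essentially the same route as the paper's: the telescoping product around the cycle, the resulting polynomial identity in $r$ and $d$, divisibility of the extreme nonzero coefficients via coprimality (which your rational root theorem invocation merely repackages), and the bound $\binom{k}{s}N^{s} \leq N^{k}$ on the differences of elementary symmetric functions. The only cosmetic difference is that the paper keeps the homogeneous equation $c_0 r^{k} + \cdots + c_k d^{k} = 0$ and divides by $r^{l}d^{k-m}$ instead of dehomogenizing to $F(r/d)=0$.
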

\begin{proof}
  Let $C = b_1b_1...b_{2k}$ be a simple cycle in $G$ of length $2k \leq n$, so $b_ib_{i+1} \in A, i=1,..,2k$ (hereafter we assume addition of indices modulo $2k$).  By simple algebra we have 
  
\beq \label{eq:chain}
  b_{2k}b_1 = \frac{b_1b_2}{b_2b_3}\frac{b_3b_4}{b_4b_5}\cdots \frac{b_{2k-3}b_{2k-2}}{b_{2k-2}b_{2k-1}}b_{2k-1}b_{2k},
\eeq
and since for each $i$ there is some $j$ such that $b_ib_{i+1} = D(r+j_id)$ we can rewrite (\ref{eq:chain}) as 
\beq \label{eq:chain2}
  \prod^k_{i = 1}{(r+j_{2i}d)}   = \prod^k_{i=1}{(r+j_{2i-1}d)},
\eeq
where all $j_i$ are distinct (since for ever $a \in A$ we have chosen only a single representation). Expanding the brackets, we obtain the equation
\beq \label{eq:poly}
  c_0r^k + c_1r^{k-1}d + ... + c_{k-1}rd^{k-1} + c_kd^k = 0
\eeq
for integer coefficients $c_i$ which depend only on indexes $j$. First, let us note that it cannot happen that all $c_i  = 0$ since otherwise (\ref{eq:chain2}) holds for \emph{any} $r, d$ which contradicts the fact that all $j$s are distinct. Let $l$ and $m$ be the smallest and largest indices such that $c_l, c_m \neq 0$ respectively. Obviously, $l < m$ and dividing (\ref{eq:poly}) by $r^ld^{k-m}$ we arrive at
\beq \label{eq:polyred}
  c_lr^{m-l} + ... + c_md^{m-l} = 0.
\eeq
Since $r$ and $d$ are coprime, $r | c_m$ and $d | c_l$ (all the terms in the middle are divisible by $rd$), and the claim of the lemma follows if the bound $c_i \leq N^k$ holds for all coefficients. But on the other hand, $c_t$ is a sum of $2\binom {k} {t}$ $t$-fold products of $j$s. Since each index $j$ is less than $N$, for $t \leq k/2$ we have
$$
 c_t \leq 2k^t N^{t} < n^t N^{t} < N^k,
$$
and analogously, for $t \geq k/2$
$$
 c_t \leq 2k^{k-t} N^{t} < n^{k-t} N^{t} < N^k.
$$
Here we used the trivial bound $2k \leq n$.

\end{proof}

\begin{lem} \label{lem:main}
	If $d < N^{k}$, $r < N^{k}$, $3^{k} < N/9$ then $N \leq 36kn \log n$ for sufficiently large $n$.
\end{lem}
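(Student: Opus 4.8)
The plan is to bound $N$ by charging each term $a_i = D(r+di)$ to an element of $B$ that appears in a fixed factorization $a_i = b_ib_i'$, and to show that no element of $B$ can be charged too often. First I would record the size information coming from the hypotheses: since $r < N^k$, $d < N^k$ and $i < N$, every term satisfies $a_i = D(r+di) < 2DN^{k+1}$, so $m_i := r + di < 2N^{k+1}$; in particular each $m_i$ has at most $\log_2(2N^{k+1}) < (k+2)\log_2 N$ prime factors counted with multiplicity. I would also carry along the coprimality $\gc(Dr,d) = 1$, which (exactly as in Lemma \ref{gcdbound}) gives $\gc(a_i,a_j) \le DN$ and $\gc(m_i,m_j) \le |i-j| < N$, so the terms are pairwise almost coprime.

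The clean half of the argument handles the \emph{large factor} contribution. For each $i$ write $a_i = b_ib_i'$ with $b_i \le b_i'$, so $b_i' \ge \sqrt{a_i}$. If $b_i' > DN$ then, by Lemma \ref{gcdbound}, $b_i'$ divides at most one term of $A$; hence the assignment $i \mapsto b_i'$ is injective on $\{i : b_i' > DN\}$, and there are at most $n$ such indices. Equivalently, every index with $a_i > (DN)^2$ is controlled, and it remains to bound the ``small'' terms, those with both factors at most $DN$. For these I would decompose $B$ into dyadic ranges $[2^s,2^{s+1})$ and bound, for each value $\beta \in B$, the number of $i$ with $b_i' = \beta$ by the number of terms divisible by $\beta$, which coprimality bounds by $N/\beta + 1$ (the part of $\beta$ prime to $D$ must divide $m_i$). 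Here the factor $\log n$ enters through $\sum_{\beta \in B} 1/\beta \le 1 + \log n$, valid because the $j$-th smallest element of $B$ is at least $j$.

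The difficulty is that this divisor count alone runs into a harmonic-sum barrier: small elements of $B$ (say $\beta = 2$) can in principle divide a constant proportion of the terms, and $\sum_{\beta \in B} N/\beta$ exceeds $N$, so the naive bound is vacuous. To get past this I would exploit the multiplicative structure rather than mere divisibility, namely that each term must factor with \emph{both} parts in $B$. Concretely I expect to use large prime factors: a prime $p > N$ divides at most one $m_i$, and it must divide one of $b_i, b_i' \in B$; distinct such primes force distinct elements of $B$, capping the number of terms with a prime factor exceeding $N$ by roughly $n$ times the maximal number of large prime factors of a single element, i.e.\ by $O(n\,k\log N)$ via the prime-count bound from the first paragraph. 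Balancing the ``large prime'' and ``divisor'' counts across dyadic scales should yield a bound of the shape $N \le C\,n\,k\log N$.

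Finally I would convert $\log N$ into $\log n$ by a bootstrap. The hypothesis $3^k < N/9$ is precisely what prevents $k$ from being too large: it gives $k + 2 < \log_3 N$, hence $k\log N < \log^2 N/\log 3$, and feeding $N \le C n k \log N$ back into itself shows $\log N = O(\log n)$ (and controls $\log k$), after which $N \le C n k \log N \le 36\,kn\log n$ for large $n$ with room in the constant. The step I expect to be most delicate is making the ``both factors lie in $B$'' constraint beat the harmonic-sum barrier uniformly in the common difference $d$, and, relatedly, controlling the possibly large common factor $D$ in the divisibility counts; the densely packed case of small $d$, where almost all terms are small and divisor counting is weakest, is exactly where the large-prime-factor injectivity must carry the argument.
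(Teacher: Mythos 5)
Your proposal takes a genuinely different route from the paper, and it has a gap at exactly the spot you flag as delicate; I do not think the sketch closes it. The key difficulty is the one you name: the divisor-counting half hits the harmonic-sum barrier, and your proposed rescue --- primes $p>N$ dividing some $m_i=r+di$ --- only covers terms that actually possess such a prime factor. Nothing forces an $m_i$ to have a prime factor exceeding $N$: in the densest case ($D=d=1$, $r\approx N^2$) a positive proportion of the integers in $[r,r+N]$ are $N$-smooth, both factors of each such term sit in populated dyadic ranges, and no balancing of the two counts rescues the argument. There are two further problems with the large-prime mechanism itself: (i) the cap "number of primes $>N$ dividing a single $\beta\in B$" cannot be bounded by the $(k+2)\log_2 N$ you derived for $m_i$, because elements of $B$ are only bounded by roughly $DN^{k+1}$ with $D$ unbounded; and (ii) when $p\mid D$ the injectivity "distinct primes force distinct elements of $B$" breaks, since $p$ then divides every term of $A$ and can sit inside many elements of $B$ without distinguishing any particular term.

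The paper's proof avoids all of this by choosing the primes \emph{below} $N$ rather than above it: it takes the roughly $N/(6\log N)$ primes $p\in(N/3,N/2)$ with $p\nmid d$. Because each such $p$ is less than the length of the progression and coprime to $d$, \emph{every} one of them is guaranteed to divide two or three of the $r+di$, producing "$p$-irregular" edges of $G$, i.e.\ edges with $\mathrm{ord}_p(b_ib_j)>\mathrm{ord}_p(D)$. Since $r+di<N^{k+1}$, each edge is $p$-irregular for at most $k+1$ of these primes, so if $N>36(k+1)n\log n$ there are more than $3(k+1)n$ usable primes and one can select more than $n$ edges, each the unique $p$-irregular edge for its own prime $p$. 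A graph on $n$ vertices with more than $n$ edges contains a cycle, and the telescoping identity $\prod_{i\text{ odd}}a_i=\prod_{i\text{ even}}a_i$ around that cycle is violated at the prime attached to whichever selected edge lies on it: one side has $p$-adic valuation strictly larger than the other. Note that this valuation comparison is against $\mathrm{ord}_p(D)$, so it works whether or not $p\mid D$, and it never needs any bound on the sizes of elements of $B$ --- precisely the two points where your version gets stuck. The hypothesis $3^k<N/9$ enters only to ensure $d<N^k$ has at most $k$ prime factors in $[N/3,N/2]$, not for a bootstrap on $\log N$.
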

\begin{proof}
 Suppose for contradiction that $N > 36(k+1)n\log n$. Let $p_1, ..., p_K$ be primes such that $N/3 < p_i < N/2$ and $p_i \nmid d$. By the Prime Number Theorem there are more than $N/6\log N > 3(k+1)n$ primes in $[N/3, N/2]$ (for $N$ large enough) and at most $k$ of them may divide $d$ (since $d < N^k$ and $3^{k+1} < N$\footnote{This is the only place where we use the technical bound $3^{k} < N/9$, but as we will see later, this restriction does not affect the final bound as $k$ is going to be $o(log n)$}), so $K > 3(k+1)n$. 
 
 Recall the graph $G$ with $b \in B$ as vertices and edges that correspond to the relation $b_ib_j \in A$, with each representation of $a \in A$ being unique. Let us call an edge of $G$ \emph{regular}, if 
 $$
 	\gc(\frac{b_ib_j}{D}, p_1...p_K) = 1,
 $$
 or, in words, if $b_ib_j$ does not have any additional power of the aforementioned $p_1, ..., p_K$ in its prime decomposition. Otherwise, if $\ord_p(b_ib_j) > \ord_p(D)$ let us call an edge $(b_i, b_j)$  $p$-\emph{irregular}. Further, by saying just "an irregular edge", we mean an edge that is $p$-irregular for at least one $p \in \{p_1,...,p_K\}$. Note, that it can be irregular for some primes, but regular with respect to others.

  Let $p \in P_K = \{p_1,...,p_K\}$. Since $p \nmid d$, $dj$ covers the full system of residues modulo $p$ when $j$ goes from $0$ to $N$. Hence, since $p \in [N/3, N/2]$, there are either two or three indices $j$ such that $p \mid dj + r$, and thus two or three $p$-irregular edges in $G$.
  
  By the pigeonhole principle, we can pick a set $S$ of at least $n+1$ distinct irregular edges, such that for every $p \in P_K$ there is at most one $p$-irregular edge in $S$. Indeed, every element in $A$ can have at most $k+1$ divisors in $P_K$ (due to the bounds $d < N^{k}, r < N^{k}$ we have $r + id < N^{k+1}$ for $0 \leq i \leq n$). On the other hand, for every $p \in P_K$ there are at most three elements in $A$ it divides.  
  
  
 
 The next step is to clean up our original graph $G$ by removing all edges except that are not in $S$. We will refer to the resulting graph as $G'$. Of course, it is simple and bipartite as was $G$. Now we claim that it contains no cycles. Indeed, let $e_p$ be a (unique) $p$-irregular edge in $G'$ and $e_p=a_1a_2...a_{2l}$ be a cycle it lies on (of course, here indices of $a$'s indicate just the ordering in the cycle, not in $A$). Note, that now we write the cycle as a set of edges rather than vertices, meaning that $a_i \in A$ and each $a_i$ is a product of two consecutive vertices of the cycle. Thus, arguing exactly as in Lemma \ref{lm:dbound} it is easy to see that
 $$
 	\prod_{i \text{ is odd}} a_i = \prod_{i \text{ is even}} a_i.
 $$
But this cannot happen. Indeed, for each $a_i \neq e_p = a_1$ we have $\ord_p(a_i) = \ord_p(D)$ since $e_p$ is the only $p$-irregular edge in $G'$, and the $p$-order of the RHS is strictly less than of the LHS. Thus, $G'$ cannot contain more than $n$ edges. Contradiction.

\end{proof}

Putting it all together, we obtain the main result of this note.

\begin{proof} \textbf{[of Theorem 1]}
   If $G$ does not contain even cycles of length up to $2k$ the result of Bondy and Simonovits from extremal combinatorics \cite{BS} gives 
   \beq \label{eq:cyclebound}
   N \leq \mathrm{ex}(n, C_{2k}) < 100kn^{1+1/k}.
   \eeq
   But otherwise Lemmas \ref{lm:dbound} and \ref{lem:main} apply and we obtain $N \ll(k+1)n\log n$, so finally we have
   $$
   	  N \leq O(\max\{kn^{1+1/k}, kn\log n\}).
   $$
   This can be optimized by taking $k = \log n/\log \log n$ which gives the desired bound $N = O(n\log^2 n/\log \log n)$.
\end{proof}

Now we present a construction for the lower bound of Theorem 2.
\begin{proof} \textbf{[of Theorem 2]}
   Consider a set $B$ which consists of all natural numbers from $1$ to $n$ plus all primes in the interval $[n, \lfloor n\log n \rfloor]$. By the Prime Number theorem, $|B| \leq 2n$ for large $n$ and $B.B$ contains all natural numbers in the interval $[1, \lfloor n\log n \rfloor]$ which is an arithmetic progression of size $\Omega(n\log n)$. 
   
   Indeed, suppose $x \in [n, \lfloor n\log n \rfloor]$. If the maximal prime $p$ that divides $x$ is greater than $\log n$ than $x/p \leq n$ and $x = p\cdot\frac{x}{p}$ is clearly in $B.B$, since all primes in the interval $[1, \lfloor n\log n \rfloor]$ are in $B$. Otherwise, run the following algorithm. Let $p_1$ be an arbitrary prime divisor of $x$ and assign $d_1 = p_1, d_2 = x/p_1$. 
Then choose the smallest prime divisor $p'$ of $d_2$, assign $d_1 := d_1p', d_2 := d_2/p'$ and iterate this procedure until $d_2 = 1$. If there is a moment when both $d_1, d_2 \leq n$ then of course $x \in B.B$ and we are done. Otherwise, at some step $d_1 < n, d_2 > n$, but $d_1p' > n, d_2/p' < n$. But since every prime divisor of $x$ is less than $\log n$ we have 
$$
x = d_1d_2 \geq \frac{n^2}{\log n},
$$
 which contradicts that $x \in [n, \lfloor n\log n \rfloor]$.
   
\end{proof}

\section{The case of complex numbers}

\begin{thm} \label{thm:reals}
	 Suppose that $B$ is a set of $n$ complex numbers. Then the longest arithmetic progression in $B.B$ has length at most $O(n^{3/2})$.
\end{thm}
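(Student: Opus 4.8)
The plan is to build the same bipartite incidence graph $G$ as in the proof of Theorem~\ref{thm:main}, but now over $\mathbb{C}$, and to extract the bound from a single shortest even cycle. Writing the progression as $A=\{r+di\}$ with $d\neq 0$, I fix for every $a\in A$ one representation $a=b_ib_j$ and let the two color classes of $G$ be two copies of $B$, so that $|V(G)|=2n$ and $|E(G)|=N$. The arithmetic of prime factorizations is no longer available, so the only structural input surviving from Section~3 is the multiplicative cycle identity behind Lemma~\ref{lm:dbound}. The strategy is therefore to control the four-cycles of $G$ and to invoke the K\H{o}v\'ari--S\'os--Tur\'an estimate, that is, the case $k=2$ of (\ref{eq:cyclebound}), which on $2n$ vertices reads $N\le \mathrm{ex}(2n,C_4)=O(n^{3/2})$.

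First I would record the dichotomy that a four-cycle imposes. By the $k=2$ instance of (\ref{eq:chain2}), a $C_4$ in $G$ consists of four distinct elements $r+dm_1,\dots,r+dm_4$ of $A$ with
\[
(r+dm_1)(r+dm_3)=(r+dm_2)(r+dm_4).
\]
Expanding and dividing by $d$ gives $r\sigma+d\pi=0$ with $\sigma=(m_1+m_3)-(m_2+m_4)$ and $\pi=m_1m_3-m_2m_4$. As in Lemma~\ref{lm:dbound}, distinctness of the indices rules out $\sigma=\pi=0$ (which would mean $\{m_1,m_3\}=\{m_2,m_4\}$), while $\sigma=0$ alone would force $\pi=0$ since $d\neq0$. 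Hence $\sigma\neq0$ and $t:=r/d=-\pi/\sigma\in\mathbb{Q}$. Consequently, if $t\notin\mathbb{Q}$ the graph $G$ contains \emph{no} four-cycle, and K\H{o}v\'ari--S\'os--Tur\'an immediately yields $N=O(n^{3/2})$. This already covers every genuinely non-real progression, and most real ones.

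The main obstacle is the rational case $t\in\mathbb{Q}$, where four-cycles do occur (one checks that $\mathrm{codeg}(b,b')\ge 2$ is only possible when both $t$ and $b/b'$ are rational, and explicit real sets realizing such $C_4$ exist). Here I would clear the denominator of $t$: writing $t=P_0/Q_0$ in lowest terms gives $A=D'A_0$ with $D'=d/Q_0\in\mathbb{C}$ and $A_0=\{P_0+Q_0i\}\subseteq\mathbb{Z}$ a bona fide integer progression. All elements of $A$ now share one of two arguments, so passing to absolute values and keeping the longer half (a loss of a factor two) produces an integer progression $A_0^{+}$ of length $\ge N/2$ contained in the product set $T.T$ of the positive real set $T=\{|b|/\sqrt{|D'|}\}$, with $|T|\le n$. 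Taking logarithms turns $A_0^{+}$ into a strictly convex set $C$, $|C|\ge N/2$, lying inside the sumset $\log T+\log T$.

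It then remains to prove the additive statement that a convex set of size $M$ inside the sumset of an $n$-element set has $M=O(n^{3/2})$, which I would again phrase through the graph: a common neighbour of two vertices corresponds to a value realised as a difference both in $C-C$ and in $(\log T)-(\log T)$, so the number of cherries (paths of length two) is at most the cross-energy $\sum_\delta r_{C-C}(\delta)\,r_{\log T-\log T}(\delta)$, while the convexity of $i\mapsto\log(P_0+Q_0i)$ keeps $C$ additively unstructured. Combining a Szemer\'edi--Trotter type incidence estimate for convex sets (in the spirit of Elekes--Nathanson--Ruzsa) with the Cauchy--Schwarz lower bound $\#\mathrm{cherries}\ge N^2/(4n)-N/2$ on the degree sequence then forces $N=O(n^{3/2})$. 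This last estimate is the delicate part, and it is exactly where the exponent degrades to $3/2$: over $\mathbb{C}$ one can no longer suppress the longer cycles $C_6,C_8,\dots$ via the prime argument of Lemma~\ref{lem:main}, leaving only the curvature of the logarithm of a progression, exploited at the level of $C_4$, to work with.
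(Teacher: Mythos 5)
Your first half is sound and coincides with the paper's: the absence of a $C_4$ gives $N\le \mathrm{ex}(2n,C_4)=O(n^{3/2})$ by K\H{o}v\'ari--S\'os--Tur\'an, and a $C_4$ forces $r/d\in\mathbb{Q}$ via the cycle identity (this is exactly Lemma \ref{lem:ratA} and Corollary \ref{cor:ratA}, modulo the paper's preliminary normalization $d=1$). The problem is your treatment of the rational case. After clearing the denominator of $t=r/d$ you arrive at an \emph{integer} arithmetic progression $A_0^{+}$ contained in the product set of a set $T$ of $n$ positive \emph{reals} --- and Theorem \ref{thm:main} applies only to sets of natural numbers, so you cannot stop there. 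Your way out is to take logarithms and prove that a convex set of size $M$ inside a sumset of an $n$-element set satisfies $M=O(n^{3/2})$. But that statement is not established anywhere, and it is essentially Hegarty's question, which the paper explicitly lists as \emph{open} in the Discussion section (Questions 1 and 2); even the weaker bound $o(n^2)$ is not known there. The sketch via ``cross-energy plus an Elekes--Nathanson--Ruzsa-type incidence bound'' does not close this: ENR-type results lower-bound sumsets \emph{of} convex sets, not the size of convex sets \emph{inside} sumsets, and no concrete incidence inequality is exhibited that yields the claimed conclusion. So the rational case --- which is the only case where any real work remains --- is resting on an unproved (plausibly open) assertion.

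The idea you are missing is that rationality of $A$ propagates to $B$ itself through the connectivity structure of $G$. Once every element of $A$ is rational, every ratio of two elements of $A$ is rational, so telescoping along a path as in (\ref{eq:chain}) shows that $b_i/b_j$ is rational for vertices joined by an even path and $b_ib_j$ is rational for vertices joined by an odd path (Lemma \ref{lem:ratpath}). Consequently, within each connected component of the bipartite graph one may pick a reference vertex $b_w$ and replace $b\mapsto b/b_w$ on one colour class and $b\mapsto bb_w$ on the other; this leaves every product along an edge, hence the set $A$, unchanged, while making all vertices of the component rational (Lemma \ref{lem:struct}). Doing this for every component produces a set $B'$ of $n$ rationals with $A\subset B'.B'$; multiplying $B'$ by a common denominator and taking absolute values reduces to Theorem \ref{thm:main}, which gives the far stronger bound $O(n\log^2 n/\log\log n)$ in this branch, so the overall $O(n^{3/2})$ comes entirely from the $C_4$-free branch. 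I would encourage you to replace your convex-set endgame with this rescaling argument.
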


Our strategy will be to show that if $B.B$ contains an arithmetic progression $A$ of size $\Omega(n^{3/2})$ then in fact one can take a new set $B'$ of only rational numbers, perhaps twice as big as the original set $B$, such that $B'.B'$ contains a progression of the same length. Unfortunately, we can prove that such a reduction exists only if the arithmetic progression $A$ in the original set has length at least $\Omega(n^{3/2})$, so the resulting bound is much weaker than what Theorem \ref{thm:main} gives for sets of natural numbers.

So let $A = \{r + di\}$ be an arithmetic progression of length $N$ in $B.B$. The first step is to scale $A$ by simply dividing each element in $B$ by $\sqrt{d}$, and from now on we will assume that $A = \{r + i\}$. 

%
%
Recall the graph $G$ which provides a one-to-one correspondence between elements of $A$ and its edges, namely an edge $e_a = \langle b_i, b_j \rangle$ corresponds to the element $a=b_ib_j$.

\begin{lem} \label{lem:ratA}
	If $G$ contains a $4$-cycle then $r$ is rational and so are all elements of $A = \{r + i\}$.
\end{lem}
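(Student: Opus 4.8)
The plan is to extract from a single $4$-cycle the one polynomial identity it forces among the four associated elements of $A$, exactly as in Lemma \ref{lm:dbound} specialized to $k=2$, and then read off the rationality of $r$ directly from that identity. Suppose $G$ contains a $4$-cycle on vertices $b_1, b_2, b_3, b_4$, so that the four edges give four elements of $A$, say $b_1 b_2 = r + j_1$, $b_2 b_3 = r + j_2$, $b_3 b_4 = r + j_3$ and $b_4 b_1 = r + j_4$. Because the correspondence between edges of $G$ and elements of $A$ is one-to-one, the integers $j_1, j_2, j_3, j_4$ are pairwise distinct. Multiplying the two pairs of opposite edges both yield $b_1 b_2 b_3 b_4$, whence the key relation
\beq
(r + j_1)(r + j_3) = (r + j_2)(r + j_4).
\eeq

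Expanding and cancelling the $r^2$ terms reduces this to the linear equation $(j_1 + j_3 - j_2 - j_4)\,r = j_2 j_4 - j_1 j_3$. If the coefficient $j_1 + j_3 - j_2 - j_4$ is nonzero, then $r$ equals the ratio of the two integers $j_2 j_4 - j_1 j_3$ and $j_1 + j_3 - j_2 - j_4$, and is therefore rational; since every element of $A$ has the form $r + i$ with $i$ an integer, all elements of $A$ are then rational as well. This is the entire content of the argument in the generic case, and no metric or analytic input about the complex numbers $b_i$ is needed beyond the multiplicative identity above.

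The only obstacle is the degenerate case in which the linear coefficient vanishes, i.e. $j_1 + j_3 = j_2 + j_4$. I expect to dispose of it by a Vieta argument: in that case the linear equation forces $j_2 j_4 - j_1 j_3 = 0$ as well, so the pairs $\{j_1, j_3\}$ and $\{j_2, j_4\}$ have both the same sum and the same product, hence are the two roots of one and the same monic quadratic $t^2 - (j_1 + j_3) t + j_1 j_3$. This makes them coincide as multisets, contradicting the fact that $j_1, j_2, j_3, j_4$ are distinct. Thus the degenerate case cannot occur, and the conclusion of the lemma holds unconditionally.
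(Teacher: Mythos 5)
Your proof is correct, but it takes a genuinely different route from the paper. You specialize the multiplicative cycle identity of Lemma \ref{lm:dbound} to $k=2$: opposite edges of the $4$-cycle give $(r+j_1)(r+j_3)=(r+j_2)(r+j_4)$, the quadratic terms cancel, and $r$ is determined by a linear equation with integer coefficients; the degenerate case is excluded by your Vieta argument, which correctly relies on the fact that the four edges of the cycle correspond to four pairwise distinct elements of $A$ (the edge--element correspondence in $G$ is a bijection, so this is justified). The paper instead argues additively: $b_1(b_2-b_4)$ and $b_3(b_2-b_4)$ are each differences of two distinct elements of $A=\{r+i\}$, hence nonzero integers, so $b_1/b_3$ is rational; this makes the ratio $q=(r+i_1)/(r+i_2)$ rational and $\neq 1$, and $r=(i_1-qi_2)/(q-1)$ follows. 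Your version has the advantage of being uniform with the machinery already developed for Lemma \ref{lm:dbound} (and for longer even cycles it would still show $r$ is algebraic of bounded degree, though rationality is only immediate for $4$-cycles, where the equation is linear); the paper's version isolates the simple observation that differences of elements of $A$ are integers, which is also what powers Lemma \ref{lem:ratpath}. Both arguments are complete.
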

\begin{proof}
		Let $\langle b_1b_2b_3b_4 \rangle$ be a $4$-cycle in $G$. Then both $b_1(b_2 - b_4)$ and $b_3(b_2 - b_4)$ are non-zero integers as they are differences of two distinct elements of $A$. Thus, $b_1/b_3$ is rational and so is $q = b_1b_2/b_2b_3 \neq 1$. On the other hand, writing $b_1b_2 = r + i_1$ and $b_2b_3 = r + i_2$, we have
		$$
		\frac{r+i_1}{r+i_2} = q,
		$$ 
so 
$$
	 r = \frac{i_1 - qi_2}{q - 1}
$$
is rational since $i_1, i_2$ are integers.
\end{proof}
     
\begin{corr} \label{cor:ratA}
	If $A = \{r + i\}$ is contained in a product set $B.B$ with $|B|=n$ and $|A| > n^{3/2}$ then it consists of rational numbers.
\end{corr}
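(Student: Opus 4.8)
The plan is to reduce the corollary to Lemma \ref{lem:ratA}: that lemma shows that a single $4$-cycle in $G$ already forces $r$, and hence all of $A = \{r+i\}$, to be rational. So it suffices to prove that $G$ must contain a $4$-cycle whenever $N = |A| > n^{3/2}$. Recall that $G$ has one vertex for each element of $B$ and one edge for each element of $A$; since the product $b_ib_j$ determines the unordered pair $\{b_i,b_j\}$, distinct elements of $A$ give distinct edges, so $G$ is simple apart from the at most $n$ loops coming from those $a \in A$ whose chosen representation is a square $b^2$. Deleting these loops yields a genuine simple graph $G_0$ on $n$ vertices with at least $N - n$ edges, and any $4$-cycle of $G_0$ is a $4$-cycle of $G$.

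The key input is the classical extremal bound for $C_4$-free graphs. Rather than feeding $k=2$ into the Bondy--Simonovits estimate (\ref{eq:cyclebound}), which only yields $\mathrm{ex}(n, C_4) = O(n^{3/2})$ with a large constant, I would run the sharper K\H{o}v\'ari--S\'os--Tur\'an (Reiman) counting: in a $C_4$-free graph any two vertices have at most one common neighbour, so counting paths of length two gives $\sum_{v} \binom{\deg v}{2} \leq \binom{n}{2}$. Applying convexity to the left-hand side with average degree $2N/n$ leads to
\[
  \mathrm{ex}(n, C_4) \;\leq\; \frac{n}{4}\left(1 + \sqrt{4n-3}\right) \;<\; n^{3/2}.
\]
Combining this with the edge count of $G_0$, for all sufficiently large $n$ we have $N - n > n^{3/2} - n > \mathrm{ex}(n, C_4)$, so $G_0$, and therefore $G$, cannot be $C_4$-free.

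Thus $G$ contains a $4$-cycle, and Lemma \ref{lem:ratA} finishes the proof. I do not expect any serious obstacle in this argument; its only delicate feature is quantitative. One must ensure that the constant hidden in the $C_4$ extremal number is genuinely below $1$ (after the harmless loss of the $\leq n$ loops), which is exactly why the explicit cherry-counting bound is preferable to a black-box appeal to (\ref{eq:cyclebound}). It is worth emphasising that the threshold exponent $3/2$ is forced precisely by the fact that $\mathrm{ex}(n, C_4) = \Theta(n^{3/2})$: this is the structural reason why the bound for complex $B$ in Theorem \ref{thm:reals} is so much weaker than the nearly linear bound of Theorem \ref{thm:main}.
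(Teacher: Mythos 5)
Your proposal is correct and follows essentially the same route as the paper: the paper likewise deduces the corollary by combining Lemma \ref{lem:ratA} with the extremal bound $\mathrm{ex}(n,C_4)\leq \frac{n}{4}(1+\sqrt{4n-3})$ (Reiman), which forces a $4$-cycle once $|A|>n^{3/2}$. Your extra care with loops and the explicit cherry-counting derivation are fine but not a substantive departure.
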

\begin{proof}
	The claim follows from the well-known fact that a graph with more than $n^{3/2}$ edges contains a $4$-cycle\footnote{In fact, $\mathrm{ex}(n, C_4) \leq \frac{n}{4}(1+\sqrt{4n-3})$, see \cite{R}.} together with Lemma \ref{lem:ratA}.
\end{proof}

   While the condition that all elements in $A$ are rational is strong, it still does not guarantee that elements in $B$ are rational as well, so some additional tweaks are needed in order to invoke Theorem \ref{thm:main}. We will construct a slightly different set $B'$ of only rational numbers such that $B'.B'$ contains $A$. Our main observation is the following.
   
\begin{lem} \label{lem:ratpath}   
   Assume $A$ consists of rational numbers. Then if $b_i$ and $b_j$ are connected in $G$ by a path of even length, the quotient $b_i/b_j$ is rational. If they are connected by a path of odd length, the product $b_ib_j$ is rational. 
\end{lem}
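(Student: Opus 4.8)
The plan is to reuse the telescoping identity that powered the cycle analysis in Lemma \ref{lm:dbound} (equation (\ref{eq:chain})), but applied to an open path instead of a closed cycle. Write the path as $b_i = v_0, v_1, \ldots, v_\ell = b_j$ and let $a_s = v_{s-1}v_s \in A$ be the label of its $s$-th edge; by assumption every $a_s \in \mathbb{Q}$. The only structural fact I need is that each interior vertex $v_t$ (with $0 < t < \ell$) lies on exactly the two consecutive edges $a_t$ and $a_{t+1}$, so it cancels whenever these two labels are placed on opposite sides of a quotient, whereas the endpoints $v_0 = b_i$ and $v_\ell = b_j$ each occur only once.

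For the even case $\ell = 2m$, I would form the alternating quotient $\prod_{s=1}^{m} a_{2s-1}/a_{2s}$. Substituting $a_{2s-1}/a_{2s} = (v_{2s-2}v_{2s-1})/(v_{2s-1}v_{2s}) = v_{2s-2}/v_{2s}$ and telescoping collapses the product to $v_0/v_{2m} = b_i/b_j$. The left-hand side is a finite product of rationals, hence $b_i/b_j \in \mathbb{Q}$.

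For the odd case $\ell = 2m+1$, I would instead form $(a_1 a_3 \cdots a_{2m+1})/(a_2 a_4 \cdots a_{2m})$. Every interior vertex appears once in the numerator and once in the denominator and so cancels, while $v_0$ and $v_{2m+1}$ survive only in the numerator; the expression therefore equals $v_0 v_{2m+1} = b_i b_j$, again manifestly rational. Both cases can in fact be unified by noting that the signed product $\prod_{s=1}^{\ell} a_s^{(-1)^{s-1}}$ always equals $b_i\, b_j^{(-1)^{\ell-1}}$, which is $b_i/b_j$ when $\ell$ is even and $b_i b_j$ when $\ell$ is odd.

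I do not expect any genuine obstacle: the argument is pure algebra and uses the hypothesis $A \subset \mathbb{Q}$ only in the single step that declares each edge-label $a_s$ rational. The one point requiring care is the parity bookkeeping — matching an even-length path to the telescoping quotient that leaves a ratio of endpoints, and an odd-length path to the one that leaves their product — together with handling the degenerate extremes $\ell = 1$ (empty denominator, giving $a_1 = b_i b_j$ directly) and $\ell = 2$ correctly.
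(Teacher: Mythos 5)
Your proof is correct and is essentially identical to the paper's: the paper also forms the alternating quotient of consecutive edge-labels along the path (its equation for $b_i/b_j$ as a ratio of products of edges) and lets the interior vertices telescope away, handling the odd case ``in exactly the same way.'' Your unified formula $\prod_{s=1}^{\ell} a_s^{(-1)^{s-1}} = b_i\, b_j^{(-1)^{\ell-1}}$ is a tidy way to state what the paper leaves implicit, and your parity bookkeeping is in fact cleaner than the paper's (whose displayed path indices are slightly off).
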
   
\begin{proof}
   Indeed, if there is a path $L = \langle b_i, b_{i+1}...b_{i+2k+1}=b_j \rangle$ of even length we have
\beq \label{eq:rationallen}  
  \frac{b_i}{b_j} = \frac{(b_ib_{i+1})(b_{i+2}b_{i+3})...(b_{i+2k-1}b_{i+2k})}{(b_{i+1}b_{i+2})...(b_{i+2k}b_{i+2k+1})},   
\eeq
which is rational. In exactly the same way the second claim of the lemma follows.
\end{proof}   
   
 Our next step is to make elements in $B$ rational while preserving the property that $A$ is contained in $B.B$. Remember, that from the very beginning we assume our graph $G$ simple bipartite (which one can always do WLOG).

\begin{lem} \label{lem:struct}   
   Let $A$ be a subset of $B.B$ consisting of only rational numbers and the corresponding incidence graph $G$ is bipartite.  Then there is a set of rational numbers $B'$ of size $|B|$ such that $A \subset B'.B'$.
   
\end{lem}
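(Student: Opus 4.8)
The plan is to transform $B$ one connected component of $G$ at a time, using the parity structure of the bipartite graph together with Lemma \ref{lem:ratpath}. Fix a connected component $C$ and pick a root vertex $b_0 \in C$; say $b_0$ lies in the color class $X$, the other class being $Y$. Since $G$ is bipartite, every vertex $b \in C \cap X$ is joined to $b_0$ by a path of even length, while every $b \in C \cap Y$ is joined to $b_0$ by a path of odd length. Hence Lemma \ref{lem:ratpath} guarantees that $b/b_0$ is rational for $b \in C \cap X$ and that $b_0 b$ is rational for $b \in C \cap Y$.

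This suggests defining the replacement
$$
b' = \begin{cases} b/b_0 & \text{if } b \in C \cap X,\\ b_0\,b & \text{if } b \in C \cap Y, \end{cases}
$$
so that every $b'$ is rational. The key point is that this rescaling preserves the product along each edge: because $G$ is bipartite, every edge of $C$ joins some $b \in X$ to some $c \in Y$, and then
$$
b'c' = (b/b_0)(b_0 c) = bc,
$$
which is exactly the element of $A$ attached to that edge. First I would carry out this construction independently in every connected component (each with its own root), assigning arbitrary rational values to any isolated vertices, obtaining a set of rationals $B'$ in which every edge of $G$, and hence every element of $A$, is realized as a product $b'c'$. Thus $A \subset B'.B'$.

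It then remains to arrange $|B'| = |B|$, that is, that no two vertices collapse to the same value. Within a single component the map $b \mapsto b'$ is injective, being a nonzero scaling on each class. To rule out collisions between different components I would exploit an additional degree of freedom: multiplying all the new values in class $X$ of a component by a nonzero rational $\lambda$ and those in class $Y$ by $\lambda^{-1}$ leaves every product $b'c'$ unchanged, so a generic choice of the finitely many scaling parameters (one per component) separates all values and yields $|B'|=|B|$.

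The construction is essentially forced by Lemma \ref{lem:ratpath}, so the conceptual content is short. The part I expect to require the most care is this last bookkeeping of distinctness, together with the harmless degenerate cases in which an element of $A$ (and hence a vertex of $B$) equals zero; in the latter case one simply chooses the root $b_0$ among the nonzero vertices so that no division by zero occurs. I do not anticipate a genuine obstacle beyond these routine checks.
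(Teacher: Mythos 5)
Your construction is exactly the one in the paper: work component by component, pick a root $b_w$ in one color class, divide that class by $b_w$ and multiply the other class by $b_w$, which keeps every edge product (hence $A$) fixed while making all vertices rational by Lemma \ref{lem:ratpath}. The extra care you take over distinctness of the new values is a point the paper silently glosses over, but otherwise the argument is the same.
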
   
\begin{proof}   
   Let $K_1, K_2, \ldots, K_l$ be the connected components of the bipartite graph $G$. We will treat them separately one by one. So let $K$ be one of the components. 
 As $K$ does not contain odd cycles, we can color its edges in black and white such that there are edges only between white and black vertices. 
 
  By Lemma \ref{lem:ratpath} the quotient $b_i/b_j$ is rational for the vertices of the same color, and so is the product of any two vertices of different color.
Thus, we can take an arbitrary white element $b_w$ from $K$ and modify our set $B$ as follows:
\begin{itemize}
	\item For all white $b \in K$ set $b := b/b_w$
	\item For all black $b \in K$ set $b := bb_w$. 
\end{itemize}
  As $K$ is bipartite, this procedure will keep the set $A$ unchanged. On the other hand, it makes all the elements in $K$ rational. 
  
  Iterating the procedure above for all components, we finally obtain the set $B'$ with the desired properties.
      
\end{proof}

\begin{proof} \textbf{[of Theorem \ref{thm:reals}]} Now the theorem follows as an immediate corollary of Corollary \ref{cor:ratA} and Theorem \ref{thm:main} since multiplying our new set $B'$ by a sufficiently composite number we obtain a set of integers whose product set contains an arithmetic progression of the same length. It remains to note that by taking absolute values of $B'$ the longest arithmetic progression in $B'.B'$ can be shortened by a factor of at most two.
\end{proof}

\section{Discussion}
  The motivation for asking how long an arithmetic progression in a product set can be stems from the question asked by Hegarty \cite{H}.
  
  \begin{quest}
  Let $B$ be a set of $n$ integers and let $A$ be a strictly convex (concave) subset of $B+B$. Must $|A|=o(n^2)$? 
  \end{quest}
  
  Recall that a sequence of numbers $A = \{a_1, ..., a_n \}$ is called strictly convex (concave) if the consecutive differences $a_i - a_{i-1}$ are strictly increasing (decreasing). 
  
  It is not difficult to see that it does not matter whether the numbers in Question 1 are reals or integers. Now suppose that $B = \{\log b_i'\}$ for some $b_i'$, so $B + B = \{ \log (b_i'b_j')\}$. If $B'.B' = \{b_i'b_j'\}$ contains a long arithmetic progression, we immediately obtain a convex set of the same size in $B+B$. If we assume that $b_i'$ are natural numbers then Theorem 1 shows that the longest convex set we can possibly get in this way is of size $O(n^{1+o(1)})$. Apart from Hegarty's original inquiry, we now ask the following question that might be simpler.


 
\begin{quest}
  Can one construct an example of a set of size $n$ such that the sumset $B+B$ contains a convex (concave) set of size $n^{1+\delta}$ for some $\delta > 0$ and arbitrarily large $n$? 
\end{quest}  

\noindent \textbf{Remark.}   
  Erd\H{o}s and Newman in \cite{EN} gave an example of a set $B$ of size $\frac{n}{\log^Mn}$ such that $B+B$ covers $\{1, 2^2,..., n^2\}$ for arbitrary $M > 0$, which is better than our construction above, but still this lower bound is very weak. 
\\
\noindent \textbf{Remark.}   
 Erd\H{o}s and Pomerance in \cite{EP} asked if it is true that for a large enough $c$, every interval of length $cn$ contains a number divisible by precisely one prime in $(n/2, n]$? While the question remains open, a positive answer would give an essentially sharp upper bound $O(n \log n)$ for Theorem 1.
\\  

An obvious direction of research is to match the bound for the case of complex numbers to the one of Theorem \ref{thm:main}. Moreover, we believe that the lower bound $O(n\log n)$ is sharp for Theorem \ref{thm:main} and perhaps for Theorem \ref{thm:reals} as well.

Another interesting twist is to ask the question of the current note for subsets of finite fields $\mathbb{F}_p$. By a recent result of Grosu \cite{G}, 
the bound of Theorem \ref{thm:reals} translates to subsets $B \subset \mathbb{F}_p$ of size $O(\log \log \log p)$. While there are sets $B$ of size $O(\sqrt{p})$ such that $B.B$ covers the whole field $\mathbb{F}_p$ and thus contains an AP of size $\Omega(|B|^2)$, we conjecture that for smaller sets the bound $|B|^{1+o(1)}$ holds.
\begin{con}
	There is an absolute constant $c > 0$ such that for all $B \subset \mathbb{F}_p, |B| < c\sqrt{p}$ the product set $B.B$ cannot contain an arithmetic progression of size greater than $|B|^{1+o(1)}$. Here we assume $p$ and $|B|$ are large.
\end{con}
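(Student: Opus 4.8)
The plan is to push the architecture of the proof of Theorem \ref{thm:main} as far as it survives the passage to $\mathbb{F}_p$, and then to isolate the single step that does not. As before, fix an arithmetic progression $A = \{r + di\}_{i=0}^{N-1} \subseteq B\cdot B$ with $d \neq 0$ (hence $d$ invertible in $\mathbb{F}_p$, since $A$ has $N \geq 2$ distinct elements), and build the simple bipartite incidence graph $G$ on two copies of $B$ exactly as in Section 3, so that $n = |B| = |V(G)|$ and $N = |E(G)|$. The construction of $G$, the Bondy--Simonovits estimate \cite{BS}, and the cycle identity are purely combinatorial or algebraic and transfer verbatim. Thus if $G$ contains no even cycle of length at most $2k$ we immediately get $N \leq \mathrm{ex}(n, C_{2k}) < 100 k n^{1+1/k}$, which is already $n^{1+o(1)}$ once $k \to \infty$ slowly. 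The whole difficulty is therefore concentrated in the regime where short even cycles do exist.

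In that regime, a simple cycle of length $2k$ yields, exactly as in Lemma \ref{lm:dbound}, the identity $\prod_{s=1}^{k}(r + d\,j_{2s}) \equiv \prod_{s=1}^{k}(r + d\,j_{2s-1}) \pmod{p}$, where the $j$'s are distinct residues in $\{0,\dots,N-1\}$. Dividing by $d^k$ (legitimate since $d$ is invertible) shows that $\rho := r\,d^{-1} \bmod p$ is a root modulo $p$ of a \emph{nonzero} integer polynomial $P_C(x) = \sum_{t=0}^{k} c_t x^t$ of degree at most $k$ whose coefficients satisfy $|c_t| < N^k$, by the very counting of Lemma \ref{lm:dbound}. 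The structural observation I would exploit is this: \emph{provided} $N^k < p$, no coefficient vanishes modulo $p$ unless it is genuinely zero, so $P_C \not\equiv 0 \pmod{p}$ and $\rho$ is confined to at most $k$ residue classes determined by the cycle $C$. This is the finite-field surrogate for the smallness conclusion $r, d \leq N^k$ of Lemma \ref{lm:dbound}, and it is also where the threshold enters: one needs $k = o(\log p / \log N)$, which is consistent with (indeed forces) $N \leq |B|^{1+o(1)}$ in the range $|B| \leq c\sqrt{p}$.

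The genuinely missing ingredient is the analogue of Lemma \ref{lem:main}. Over $\mathbb{Z}$ that lemma relied on prime divisors $p_i \in [N/3, N/2]$ of the elements $r + di$, a notion with no counterpart in $\mathbb{F}_p$, where elements have neither a canonical factorization nor a size. I would therefore replace it by an incidence or sum-product argument native to $\mathbb{F}_p$: count the incidences between the Cartesian grid $B \times B$ (of size $n^2$) and the family of $N$ hyperbolas $\{xy = a : a \in A\}$. Each hyperbola carries at least its one chosen representative, giving at least $N$ incidences, while the additive rigidity of $A$ (after scaling, an interval) should allow one to feed the configuration into a point--plane incidence bound of Rudnev type, or the Stevens--de Zeeuw point--line bound, to force $N \leq n^{1+o(1)}$. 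The window $|B| < c\sqrt{p}$ is precisely where such bounds are nontrivial: the grid has $\lesssim p$ points, genuine expansion is expected below $\sqrt{p}$, whereas at $|B| \sim \sqrt{p}$ the product set can fill $\mathbb{F}_p$ and long progressions appear for free, which is exactly why this threshold is built into the conjecture.

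I expect the main obstacle to be quantitative rather than structural: converting the coexistence of multiplicative structure (being a product set $B\cdot B$) and additive structure (being an arithmetic progression) into a power-saving bound strong enough to stand in for Lemma \ref{lem:main} across the entire range up to $\sqrt{p}$. The Freiman-model transference of Grosu \cite{G} reaches only sets of size $O(\log\log\log p)$, so it cannot be leveraged here, and a self-contained $\mathbb{F}_p$ argument is unavoidable. A secondary point requiring care is the degenerate sub-regime where $N$ approaches $p^{1/k}$: there the coefficient bound $N^k < p$ becomes tight and $P_C$ may fail to be nonzero modulo $p$, so the two regimes (few cycles versus many cycles) must be balanced so that the optimal choice of $k$ keeps every invoked polynomial nonvanishing.
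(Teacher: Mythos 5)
The statement you are addressing is posed in the paper as an open conjecture; the paper contains no proof of it, so your proposal must stand entirely on its own, and it does not: by your own account the analogue of Lemma \ref{lem:main} is ``the genuinely missing ingredient,'' and what you offer in its place is a direction rather than an argument. That concession is fatal, because Lemma \ref{lem:main} is the heart of the integer proof. In the paper, Lemma \ref{lm:dbound} is not an end in itself: its conclusion $r, d \leq N^k$ is used only to guarantee that each element $r + di$ is at most $N^{k+1}$ and hence has at most $k+1$ prime divisors in $[N/3, N/2]$, which is the counting that drives the pigeonhole selection of the edge set $S$ and the acyclicity argument in Lemma \ref{lem:main}. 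Your finite-field surrogate --- that $\rho = r d^{-1}$ is confined to at most $k$ residue classes modulo $p$ per short cycle --- has no comparable downstream use: it says nothing about any notion of ``irregular'' edges, because in $\mathbb{F}_p$ there is neither factorization nor size, exactly as you yourself observe. The chain of implications therefore stops precisely where the real work begins.

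The proposed replacement is also not viable as stated. Counting incidences between the grid $B \times B$ and the $N$ hyperbolas $xy = a$, $a \in A$, yields only the trivial lower bound of $N$ incidences (one chosen representative per element of $A$), and incidence theorems give \emph{upper} bounds on incidences; with a single point per curve there is nothing to contradict, so no bound on $N$ follows. To make such an argument bite you would need each $a \in A$ to have many representations in $B \times B$, which is not given and in general false. Moreover, even granting a favourable configuration, the point--plane and point--line incidence bounds currently available over $\mathbb{F}_p$ (Rudnev; Stevens and de Zeeuw) deliver power savings of the shape $N \ll n^{2-\delta}$ for a small explicit $\delta$, nowhere near the $n^{1+o(1)}$ the conjecture demands throughout the range $|B| < c\sqrt{p}$. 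The balancing issue you raise at the end (keeping $N^k < p$ so that the cycle polynomial survives reduction modulo $p$) is real but secondary; the primary defect is that no substitute for Lemma \ref{lem:main} is actually supplied. The conjecture remains open, and your text should be read as a reasonable research programme, not as a proof.
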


  Finally, a lot of related questions arise if we continue the general idea of asking how large a set with additive structure can be if it is contained in a product set? For example, instead of arithmetic progressions one may ask about generalized arithmetic progressions or just sumsets of an arbitrary set.

\section{Acknowledgements}
  I am very grateful to my supervisor Professor Peter Hegarty for helpful discussions and constant support. I would also like to thank Boris Bukh for comments during the poster session at the Erd\H{o}s 100 conference in Budapest which helped to improve the exposition. Finally, I thank the anonymous referee for valuable comments and especially for pointing out that applying the result of Bondy and Simonovits \cite{BS} to our case actually gives a better bound than a more recent result of Lam and Verstra{\"e}te \cite{LV}.

\end{document}